\newcommand*\linenomathpatchAMS[1]{%
	\expandafter\pretocmd\csname #1\endcsname {\linenomathAMS}{}{}%
	\expandafter\pretocmd\csname #1*\endcsname{\linenomathAMS}{}{}%
	\expandafter\apptocmd\csname end#1\endcsname {\endlinenomath}{}{}%
	\expandafter\apptocmd\csname end#1*\endcsname{\endlinenomath}{}{}%
}
\let\linenomathAMS\linenomathWithnumbers
\patchcmd\linenomathAMS{\advance\postdisplaypenalty\linenopenalty}{}{}{}
\let\linenomathAMS\linenomathNonumbers
\newcommand {\mm}[1] {\ifmmode{#1}\else{\mbox{\(#1\)}}\fi}
\newcommand{\ignore}[1]{}
\newsavebox{\smallProofsym}                            
\long\def\@makecaption#1#2{%
  \vskip\abovecaptionskip
  \sbox\@tempboxa{\small #1: #2}%
  \ifdim \wd\@tempboxa >\hsize
    \small #1: #2\par
  \else
    \global \@minipagefalse
    \hb@xt@\hsize{\hfil\box\@tempboxa\hfil}%
  \fi
  \vskip\belowcaptionskip}
\newcommand{\Grass}[2] {\mm{{\mathbb Gr}_{{#1},{#2}}}}
\newcommand{\Stiefel}[2] {\mm{{\mathbb St}_{{#1},{#2}}}}
\newcommand{\Indicator}[1]  {\mm{{\textbf{1}}_{#1}}}
\newcommand{\Rspace}        {\mm{{\mathbb R}}}
\newcommand{\Bcal}[2]       {\mm{\mathcal B}_{#1}{({#2})}}
\newcommand{\tile}[2]       {\mm{{J}{({#1},{#2})}}}
\newcommand{\DConstant}[2]  {\mm{{\cal D}_{{#1},{#2}}}}
\newcommand{\moment}[3]     {\mm{{\bf m}_{{#1},{#2}}^{({#3})}}}
\newcommand{\cScape}[2] {\mm{{\mathcal V}_{{#1}}{({#2})}}}
\newcommand{\VolPower}[3]    {\mm{{\|{#2}\|}_{#1}^{#3}}}
\newcommand{\Vol}[2]    {\mm{{\|{#2}\|}_{#1}}}
\newcommand{\Angle}[2]      {\mm{\varphi}{({#1},{#2})}}
\newcommand{\proj}[2]       {\mm{{#1}|_{#2}}}
\newcommand{\Voronoi}[1]  {\mm{{\rm Vor}{({#1})}}}
\newcommand{\Delaunay}[1] {\mm{{\rm Del}{({#1})}}}
\newcommand{\dime}[1]       {\mm{\rm dim\,}{#1}}
\newcommand{\conv}[1]       {\mm{\rm conv}{{#1}}}
\newcommand{\affine}[1]     {\mm{\rm aff\,}{#1}}
\newcommand{\Edist}[2]      {\mm{\|{#1}-{#2}\|}}
\newcommand{\diff}          {\mm{\rm \,d}}
\newcommand{\Skip}[1]       {}
\title{Average and Expected Distortion of Voronoi Paths and Scapes\footnote{This project has received funding from the European Research Council (ERC) under the European Union's Horizon 2020 research and innovation programme, grant no.\ 788183, from the Wittgenstein Prize, Austrian Science Fund (FWF), grant no.\ Z 342-N31, and from the DFG Collaborative Research Center TRR 109, `Discretization in Geometry and Dynamics', Austrian Science Fund (FWF), grant no.\ I 02979-N35.}
}
\titlerunning{Average and Expected Distortion}
\author[]{Herbert Edelsbrunner}
\author[]{Anton Nikitenko}
\affil[]{IST Austria (Institute of Science and Technology Austria), Am Campus 1, \\ 3400 Klosterneuburg, Austria, \texttt{edels@ist.ac.at}, \texttt{anton.nikitenko@ist.ac.at}}
\authorrunning{H. Edelsbrunner and A. Nikitenko}
\keywords{Distortion, Voronoi tessellations, Delaunay mosaics, Voronoi paths, Grassmannians, mixed cell volume, Poisson point processes, average, expectation.}
\begin{document}
\maketitle

\begin{abstract}
  The approximation of a circle with the edges of a fine square grid distorts the perimeter by a factor about $\tfrac{4}{\pi}$.
  We prove that this factor is the same \emph{on average} (in the ergodic sense) for approximations of any rectifiable curve by the edges of any non-exotic Delaunay mosaic (known as \emph{Voronoi path}), and extend the results to all dimensions, generalizing Voronoi paths to \emph{Voronoi scapes}.
\end{abstract}

\section{Introduction}
\label{sec:1}

Given a locally finite set $A \subseteq \Rspace^d$ and a line segment, the \emph{Voronoi path} of the line segment is the dual of the Voronoi tessellation of $A$ intersected with the segment.
In other words, it consists of all Delaunay edges dual to Voronoi cells of dimension $d-1$ crossed by the line segment.
We generalize it to the \emph{Voronoi scape} of $A$ and a $p$-dimensional set $\Omega \subseteq \Rspace^d$, which is a multiset of the cells in the Delaunay mosaic of $A$.
In the generic case, when $\Omega$ intersects a Voronoi $(d-p)$-cell in a finite number of points, $\mu$, the Voronoi scape contains the corresponding Delaunay $p$-cell $\mu$ times.
We are interested in the \emph{distortion}, which is the ratio of the $p$-dimensional volume of the Voronoi scape over the $p$-dimensional volume of $\Omega$.

\smallskip
Considering the Voronoi tessellation of a stationary Poisson point process and a line segment in $\Rspace^2$, \cite{BTZ00} proves that the expected distortion is $\frac{4}{\pi}$.
Extending this work to $d > 2$ dimensions, \cite{dCDe18} proves that the expected distortion is $\sqrt{2d/\pi} + O(1/\sqrt{d})$.
We remove the ambiguity in this answer by proving that the expected distortion in $\Rspace^d$ is $d!!/(d-1)!!$, if $d$ is odd, and $\frac{2}{\pi} d!! / (d-1)!!$, if $d$ is even, in which $!!$ is the double factorial.
Furthermore, we generalize the result from the line segment to rectifiable $p$-dimensional sets and prove that the expected distortion is the binomial coefficient $\tbinom{d/2}{p/2}$, in which non-integer parameters
are understood in the way the Gamma function extends the factorial:
\begin{align}
    \DConstant{p}{d}  &=  \binom{d/2}{p/2}
      =  \frac{\Gamma(\frac{d}{2} + 1)}{ \Gamma(\frac{p}{2} + 1)
         \, \Gamma(\frac{d-p}{2} + 1)} \renewcommand{\arraystretch}{1.5}
      =  \left\{ \begin{array}{ll}
           \frac{d!!}{p!! \, (d-p)!!} \frac{2}{\pi}
             &  \mbox{\rm if $d$ is even and $p$ is odd} , \\
           \frac{d!!}{p!! \, (d-p)!!}  &  \mbox{\rm otherwise}.
       \end{array} \right.
    \label{eqn:DConstant}
\end{align} 
The binomial interpretation also provides the asymptotics for $\DConstant{p}{d}$;
for the values in small dimensions see Table~\ref{tbl:distortion}.
More precisely, we prove that \eqref{eqn:DConstant} is the \emph{average distortion} for sufficiently regular $p$-dimensional sets and Voronoi tessellations, in which the average is taken over all rigid motions of the set.
The claim for stationary Poisson point processes follows because they are invariant under rotations and translations.
The proof is based on a decomposition of $\Rspace^d \times \Grass{p}{d}$ related to the \emph{mixed complex} introduced in \cite{Ede99}.
As a byproduct, we get an expression for the volumes of the cells in the mixed complex; see Corollary~\ref{cor:mixed_volumes}.
{\renewcommand{\arraystretch}{1.6}
\begin{table}[!hbt]%
  \centering \footnotesize
  \begin{tabular}{c||cccccccccc}
  & $d = 1$ & $2$ & $3$ & $4$ & $5$ & $6$ & $7$ & $8$ & $9$ & $10$ \\
 \hline \hline
 $p = 1$ & $1$ & $\frac{4}{\pi }$ & $\frac{3}{2}$ & $\frac{16}{3 \pi }$ & $\frac{15}{8}$ & $\frac{32}{5 \pi }$ & $\frac{35}{16}$ & $\frac{256}{35 \pi }$ & $\frac{315}{128}$ & $\frac{512}{63 \pi }$ \\
 $2$ &  & $1$ & $\frac{3}{2}$ & $2$ & $\frac{5}{2}$ & $3$ & $\frac{7}{2}$ & $4$ & $\frac{9}{2}$ & $5$ \\
 $3$ &  &  & $1$ & $\frac{16}{3 \pi }$ & $\frac{5}{2}$ & $\frac{32}{3 \pi }$ & $\frac{35}{8}$ & $\frac{256}{15 \pi }$ & $\frac{105}{16}$ & $\frac{512}{21 \pi }$ \\
 $4$ &  &  &  & $1$ & $\frac{15}{8}$ & $3$ & $\frac{35}{8}$ & $6$ & $\frac{63}{8}$ & $10$ \\
 $5$ &  &  &  &  & $1$ & $\frac{32}{5 \pi }$ & $\frac{7}{2}$ & $\frac{256}{15 \pi }$ & $\frac{63}{8}$ & $\frac{512}{15 \pi }$ \\
 $6$ &  &  &  &  &  & $1$ & $\frac{35}{16}$ & $4$ & $\frac{105}{16}$ & $10$ \\
 $7$ &  &  &  &  &  &  & $1$ & $\frac{256}{35 \pi }$ & $\frac{9}{2}$ & $\frac{512}{21 \pi }$ \\
 $8$ &  &  &  &  &  &  &  & $1$ & $\frac{315}{128}$ & $5$ \\
 $9$ &  &  &  &  &  &  &  &  & $1$ & $\frac{512}{63 \pi }$ \\
 $10$ &  &  &  &  &  &  &  &  &  & $1$ 
\end{tabular}
  \caption{The average, resp.\ expected distortion in small dimensions.
  Note that even rows and columns form the Pascal triangle.}
  \label{tbl:distortion}
\end{table}}

\medskip \noindent \textbf{Outline.}
Section~\ref{sec:2} prepares the proof of our main result by computing the first and second moments of the $p$-dimensional volume of the projection of a unit $p$-cube in $\Rspace^d$.
Section~\ref{sec:3} studies the space of point-direction pairs.
Section~\ref{sec:4} introduces a mild regularity condition for Voronoi tessellations.
Section~\ref{sec:5} computes the volume of the cells in the mixed complex.
Section~\ref{sec:6} proves that $\DConstant{p}{d}$ is the average distortion for $p$-dimensional shapes in $\Rspace^d$, and the expected distortion if the tessellation is of a stationary Poisson point process.
Section~\ref{sec:8} concludes the paper.

\section{Random Projections}
\label{sec:2}

We need some preliminary computations.
Let $\Grass{p}{d}$ be the \emph{(linear) Grassmannian manifold}, whose points are the $p$-planes that pass through the origin in $\Rspace^d$.
Given a $p$-dimensional unit cube, $E \subseteq \Rspace^d$, and a $p$-plane, $L \in \Grass{p}{d}$, we write $\proj{E}{L}$ for the projection of the cube onto the plane, and $\Vol{p}{\proj{E}{L}}{}$ for its $p$-dimensional volume.
The \emph{$j$-th projection moment} is the average $j$-th power of the volume of the projection.
We express this moment as an integral over the Grassmannian equipped with the uniform probability measure in \eqref{eqn:momentDef1} and convert it to two equivalent expressions involving the angle to a fixed plane in \eqref{eqn:momentDef2} and \eqref{eqn:momentDef3}:
\begin{align}
  \moment{p}{d}{j}  &=  \int\nolimits_{L \in \Grass{p}{d}} \VolPower{p}{\proj{E}{L}}{j} \diff L ,
	\label{eqn:momentDef1} \\
  &=  \int\nolimits_{L \in \Grass{p}{d}} \cos^j \Angle{L}{L_0} \diff L 
	\label{eqn:momentDef2} \\
  &=  \int\nolimits_{F \in \Stiefel{p}{d}} \VolPower{p}{\proj{F}{L_0}}{j} \diff F .
	\label{eqn:momentDef3}
\end{align}
To explain \eqref{eqn:momentDef2} and \eqref{eqn:momentDef3}, we fix the plane $L_0 \in \Grass{p}{d}$ containing $E$. 
The \emph{angle} between two $p$-planes, $\Angle{L}{L_0} \in [0, \frac{\pi}{2} ]$, is defined as the arc-cosine of the ratio of $\Vol{p}{\proj{B}{L}}{}$ over $\Vol{p}{B}{}$ for any compact set with non-empty interior, $B \subseteq L_0$.
The angle is symmetric, so we can instead consider the integrand in \eqref{eqn:momentDef2} as the projection of a unit $p$-cube in a random $p$-plane onto $L_0$.
Formally, we write $\Stiefel{p}{d}$ for the \emph{Stiefel manifold} of orthonormal $p$-frames in $\Rspace^d$, we identify a frame with the unit $p$-cube it spans, and we integrate using the uniform probability measure of $\Stiefel{p}{d}$ to arrive at \eqref{eqn:momentDef3}.

\medskip
By construction, the $0$-th projection moment is equal to $1$, independent of $p$ and $d$.
We compute the $1$-st and $2$-nd projection moments, which curiously both have intuitive geometric interpretations.
\begin{lemma}[Projection Moments]
  \label{lem:projection_moments}
  Let $d \geq 0$ and $0 \leq p \leq d$.
  Then 
  \begin{align}
    \moment{p}{d}{1}  &=  \frac{\Gamma (\frac{p+1}{2}) \; \Gamma (\frac{d-p+1}{2})}
	{\Gamma (\frac{1}{2}) \; \Gamma (\frac{d+1}{2})} ,
        \label{eqn:moment1} \\
    \moment{p}{d}{2}  &=  1 / \binom{d}{p} =  \frac{p! \, (d-p)!}{d!}.
        \label{eqn:moment2}
  \end{align}
\end{lemma}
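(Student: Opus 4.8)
The plan is to evaluate the two moments from the frame integral \eqref{eqn:momentDef3}. I identify $F \in \Stiefel{p}{d}$ with the $d \times p$ matrix $V$ of orthonormal columns, take $L_0 = \mathrm{span}(e_1, \ldots, e_p)$, and observe that the projected volume $\VolPower{p}{\proj{F}{L_0}}{}$ equals $|\det M|$, where $M$ is the top $p \times p$ block of $V$ (its first $p$ rows). Hence $\moment{p}{d}{1} = \Exp{|\det M|}$ and $\moment{p}{d}{2} = \Exp{(\det M)^2}$, both under the uniform measure on $\Stiefel{p}{d}$.

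For the second moment I would argue by Cauchy--Binet and symmetry. Writing $V_I$ for the $p \times p$ submatrix on the rows indexed by a $p$-subset $I \subseteq \{1, \ldots, d\}$, the Cauchy--Binet formula gives $\sum_I (\det V_I)^2 = \det(V^{\!\top} V) = 1$, the Pythagorean relation expressing that the frame spans unit $p$-volume. The uniform measure on $\Stiefel{p}{d}$ is invariant under the left action of $O(d)$, in particular under permuting the $d$ coordinates, so the variables $(\det V_I)^2$ all have equal expectation. Since there are $\binom{d}{p}$ subsets, each expectation is $1 / \binom{d}{p}$; taking $I = \{1, \ldots, p\}$ proves \eqref{eqn:moment2}. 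This is the promised geometric interpretation: the squared projections onto the coordinate $p$-planes split the total squared volume evenly.

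The absolute value in the first moment blocks this shortcut, so I would move to a Gaussian model. Let $A$ be a $p \times d$ matrix with independent standard normal entries; its row space is uniform on $\Grass{p}{d}$, so the first moment computed from $A$ agrees with \eqref{eqn:momentDef2}. Using the polar decomposition $A = P V$ with $P = (A A^{\!\top})^{1/2}$ and $V$ having orthonormal rows, the factor $V$ is Haar-uniform and independent of $P$ --- a standard property of Gaussian matrices. Since $|\det V_1| = \cos\Angle{L}{L_0}$ is a function of the row space alone (here $V_1$, $A_1$ denote first $p$ columns) while $\det P = \sqrt{\det(A A^{\!\top})}$ is the volume scaling, independence yields
\[
  \Exp{|\det A_1|}  =  \Exp{\sqrt{\det(A A^{\!\top})}} \cdot \moment{p}{d}{1} ,
\]
so $\moment{p}{d}{1}$ is the ratio of two purely Gaussian volume expectations.

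Both are evaluated by Gram--Schmidt: the volume spanned by $k$ independent standard normal vectors in $\Rspace^n$ factors into independent heights, the $i$-th being the length of a standard normal in an $(n-i+1)$-dimensional complement, i.e.\ a $\chi_{n-i+1}$ variable with $\Exp{\chi_m} = \sqrt{2}\,\Gamma(\tfrac{m+1}{2})/\Gamma(\tfrac{m}{2})$. Thus $\Exp{|\det A_1|} = \prod_{i=1}^{p} \Exp{\chi_{p-i+1}}$ and $\Exp{\sqrt{\det(A A^{\!\top})}} = \prod_{i=1}^{p} \Exp{\chi_{d-i+1}}$; the powers of $\sqrt{2}$ cancel and each product telescopes, to $\Gamma(\tfrac{p+1}{2})/\Gamma(\tfrac12)$ and $\Gamma(\tfrac{d+1}{2})/\Gamma(\tfrac{d-p+1}{2})$ respectively, giving \eqref{eqn:moment1}. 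I expect the only delicate point to be the independence of $P$ and $V$ in the polar decomposition, which must be invoked with care (or bypassed by a direct change of variables on $\Stiefel{p}{d}$); everything downstream is routine Gamma-function bookkeeping.
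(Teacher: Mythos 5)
Your proposal is correct. For the second moment \eqref{eqn:moment2} your argument is the same as the paper's: Cauchy--Binet applied to a uniformly random frame in $\Stiefel{p}{d}$, plus invariance under coordinate permutations to conclude that the $\binom{d}{p}$ squared coordinate projections have equal expectation summing to $1$. Where you genuinely differ is \eqref{eqn:moment1}: the paper does not compute this moment at all, but identifies it as the constant of proportionality in the classical Crofton formula and cites \cite[Formula~(5.8)]{ScWe08} for its value, whereas you derive it from scratch via the Gaussian model. Your derivation is sound: for a $p \times d$ standard Gaussian matrix $A$, the polar factor $V = (AA^{\top})^{-1/2}A$ is indeed Haar-distributed on $\Stiefel{p}{d}$ and independent of $P = (AA^{\top})^{1/2}$, because the map $A \mapsto AW$ with $W \in O(d)$ fixes $P$, sends $V$ to $VW$, and preserves the law of $A$, so the conditional law of $V$ given $P$ is the unique right-$O(d)$-invariant measure; moreover $|\det V_1|$ depends only on the row space (it is invariant under left multiplication by $O(p)$), so it equals $\cos \Angle{L}{L_0}$ as you claim, and the $\chi$-factorizations telescope to $2^{p/2}\,\Gamma(\tfrac{p+1}{2})/\Gamma(\tfrac{1}{2})$ and $2^{p/2}\,\Gamma(\tfrac{d+1}{2})/\Gamma(\tfrac{d-p+1}{2})$, whose ratio is exactly \eqref{eqn:moment1}. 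The trade-off: the paper's citation is shorter and keeps the lemma tied to integral geometry, which it reuses anyway (the Crofton formula \cite[Formula~(5.7)]{ScWe08} reappears in the proof of Theorem~\ref{thm:average_volume}), while your route is self-contained and elementary, at the cost of importing standard but nontrivial random-matrix facts --- the polar-independence step you rightly single out being the only place where real care is needed.
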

\begin{proof}
  The $1$-st projection moment appears in the classic Crofton formula of integral geometry, which says that the volume of a convex body is proportional to the average volume of its orthogonal projections.
  The constant of proportionality given in \eqref{eqn:moment1} can be found in \cite[Formula (5.8)]{ScWe08}.
  We use \eqref{eqn:momentDef3} together with a generalization of the Pythagorean theorem to compute the $2$-nd moment.
  By Pythagoras, the squared length of a line segment is the sum of squared lengths of its projections onto the coordinate axes.
  The Cauchy--Binet formula \cite[\S4.6]{BrWi89} can be used to generalize this to the squared volume of a $p$-dimensional parallelepiped in $\Rspace^d$.
  Let $P$ be such a parallelepiped, and write $P_i$ for its projection
  onto the $i$-th coordinate $p$-plane (in which the numbering is arbitrary).
  There are $\binom{d}{p}$ coordinate $p$-planes, and the Cauchy--Binet formula asserts
  \begin{align}
    \VolPower{p}{P}{2}  &=  \sum\nolimits_{i=1}^{\binom{d}{p}} \VolPower{p}{P_i}{2} .
    \label{eqn:CauchyBinet}
  \end{align}
  Letting $P = F \in \Stiefel{p}{d}$ be the uniformly random unit $p$-cube, we can take the expectation on both sides of \eqref{eqn:CauchyBinet}.
  We get $1$ on the left-hand side and the sum of $\binom{d}{p}$ identical terms on the right-hand side.
  Hence, the average squared $p$-dimensional volume of the projection is $1 / \binom{d}{p}$, as claimed.
\end{proof}

We set $\DConstant{p}{d} = \moment{p}{d}{1} / \moment{p}{d}{2}$ and leave it to the reader to verify that this agrees with \eqref{eqn:DConstant}, where $\DConstant{p}{d}$ is given in terms of Gamma functions as well as double factorials.

\section{Tiling the Space of Point-Directions} 
\label{sec:3}

We use the Delaunay mosaic to tile the space of \emph{point-direction pairs}, $\Rspace^d \times \Grass{p}{d}$.
Given a Delaunay mosaic of a set $A \subseteq \Rspace^d$, denoted $\Delaunay{A}$, consider a $p$-dimensional cell, $\gamma \in \Delaunay{A}$, and its dual $(d-p)$-dimensional Voronoi cell, $\gamma^* \in \Voronoi{A}$.
We define the \emph{$p$-tile} of $\gamma$ to consist of all pairs $(x, L) \in \Rspace^d \times \Grass{p}{d}$ such that $L+x$ has a non-empty intersection with $\gamma^*$, and $x$ lies in the projection of $\gamma$ onto $L+x$:
\begin{align}
  \tile{\gamma}{\gamma^*}  &=  \{ (x,L) \in \Rspace^d \times \Grass{p}{d}
    \mid  x \in \proj{\gamma}{L+x}
          \mbox{\rm ~and~} (L+x) \cap \gamma^* \neq \emptyset \} .
\end{align}
The \emph{tiles} decompose the space $\Rspace^d \times \Grass{p}{d}$ in the sense that they cover the space {while} their interiors are pairwise disjoint.
Since the detailed analysis of the boundaries is irrelevant for the current work, we only prove a weaker statement.
\begin{lemma}[Uniqueness of Tile]
  \label{lem:uniqueness_of_tile}
  Let $A \subseteq \Rspace^d$ be locally finite with $\conv{A} = \Rspace^d$, and let $0 \leq p \leq d$.
  Then for almost every point-direction pair,
  $(x, L) \in \Rspace^d \times \Grass{p}{d}$,
  there exists a unique $p$-tile, $\tile{\gamma}{\gamma^*}$, that contains $(x, L)$.
\end{lemma}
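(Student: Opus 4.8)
The plan is to fix the pair $(x,L)$ and reduce everything to a statement about the single affine $p$-plane $P = L+x$. First I would rewrite the two conditions defining a tile. Since $x \in \proj{\gamma}{L+x}$ holds precisely when the orthogonal $(d-p)$-plane $x + L^{\perp}$ meets $\gamma$, and the condition $(L+x)\cap\gamma^{*}\neq\emptyset$ depends on $(x,L)$ only through $P=L+x$ (indeed $L+x = L + \proj{x}{L^{\perp}}$), the pair $(x,L)$ lies in $\tile{\gamma}{\gamma^{*}}$ exactly when $P$ crosses the Voronoi cell $\gamma^{*}$ and $x \in \proj{\gamma}{P}$. Hence it suffices to prove that, for almost every $P$, the projections $\proj{\gamma}{P}$ of the Delaunay $p$-cells $\gamma$ dual to the $(d-p)$-cells $\gamma^{*}$ crossed by $P$ tile $P$; then for almost every $x$ in $P$ there is a unique such projection containing $x$ in its interior, which yields the unique tile, and existence and uniqueness are handled at once.

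The geometric core is that the section $V_{P} = \{\, C \cap P \mid C \in \Voronoi{A}\,\}$ is itself a power diagram in $P$. For $y \in P$ the Pythagorean identity $\Edist{y}{a}^{2} = \Edist{y}{\proj{a}{P}}^{2} + \Edist{a}{\proj{a}{P}}^{2}$ shows that on $P$ the Voronoi tessellation of $A$ equals the power diagram of the projected sites $\proj{a}{P}$ with additive weights $\Edist{a}{\proj{a}{P}}^{2}$. For a generic $P$ the vertices of $V_{P}$ are exactly the transversal intersection points $v_{\gamma}=P\cap\gamma^{*}$, one per crossed $(d-p)$-cell. Passing to the dual regular subdivision $R_{P}$ of the point set $\{\proj{a}{P}\}$, the cell dual to $v_{\gamma}$ is the convex hull of the projections of the power-nearest sites of $v_{\gamma}$; as $v_{\gamma}$ lies in the relative interior of $\gamma^{*}$, those nearest sites are exactly the vertices of $\gamma$, so the dual cell equals $\proj{\gamma}{P}$. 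Since $R_{P}$ is a subdivision, it covers $\conv{(\proj{A}{P})} = P$ (using $\conv{A}=\Rspace^{d}$) with pairwise disjoint interiors; this is precisely the desired tiling of $P$ by the $\proj{\gamma}{P}$, each occurring once.

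It then remains to handle the genericity. I would fix $L$ and use the decomposition $x = \proj{x}{L} + \proj{x}{L^{\perp}}$, so that $P$ depends only on $c=\proj{x}{L^{\perp}}\in L^{\perp}$. For almost every $L$ and almost every $c$ the plane $P$ is transversal to $\Voronoi{A}$: it avoids every Voronoi face of dimension below $d-p$ and meets each $(d-p)$-cell in at most one relative-interior point. This holds because, for almost every $L$, each such face has measure-zero image in $L^{\perp}$ under orthogonal projection, and there are only countably many faces by local finiteness. For such $P$ the tiling of the previous paragraph applies, and the points $x$ landing on the shared boundary of two cells $\proj{\gamma}{P}$ form a measure-zero subset of $P$. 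A Fubini argument over $\Grass{p}{d}$ then shows that the set of pairs $(x,L)$ for which existence or uniqueness fails is null, which proves the lemma.

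The main obstacle is not the geometric core---the section-is-a-power-diagram identity and the duality $\proj{\gamma}{P}=(\text{dual regular cell})$ are clean and robust even when $A$ is in degenerate position---but the careful accounting of the exceptional set: one must confirm that transversality of $P$ to the Voronoi complex and non-degeneracy of $x$ within $R_{P}$ each fail only on a null set, and then assemble these through Fubini. A secondary point to check is that distinct crossed cells $\gamma^{*}$ give distinct vertices $v_{\gamma}$, and hence distinct subdivision cells, so that each dual Delaunay cell $\gamma$ is produced exactly once and the covering of $P$ is genuinely a tiling.
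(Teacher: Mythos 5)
Your proposal is correct and follows essentially the same route as the paper's own proof: both reduce to the affine plane $P = L+x$, identify the section of $\Voronoi{A}$ with $P$ as a power diagram of the projected sites weighted by their squared distances to $P$ (the paper cites Sibson for this identity, which you re-derive via Pythagoras), and then dualize to a regular subdivision of $P$ whose cells are the projections $\proj{\gamma}{P}$, obtaining existence from the covering of $P$ (using $\conv{A}=\Rspace^d$) and uniqueness from genericity. Your treatment of the exceptional set via Fubini is somewhat more explicit than the paper's brief appeal to general position, but the underlying argument is the same.
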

\begin{proof}
  Take any point-direction pair, $(x, L)$.
  Assume without loss of generality that $x = 0$ is the origin and $L = \Rspace^p$ is a coordinate $p$-plane in $\Rspace^d$.
  Map each point $a \in A$ to the point $a' = \proj{a}{L} \in \Rspace^p$, and let $a'' = - \Edist{a}{a'}^2 \in \Rspace$ be its \emph{weight}.
  The weighted points define a weighted Voronoi tessellation and the corresponding weighted Delaunay mosaic; see e.g.\ \cite{Aur87b}.
  The mosaic is generically a simplicial complex and generally a polyhedral complex, which is geometrically realized in $\Rspace^p$ by drawing each cell, $\gamma$, as the convex hull of the points that generate the $p$-dimensional Voronoi cells sharing $\gamma^*$.
  Consider the cells in $\Voronoi{A}$ that have a non-empty intersection with $L$, write $\cScape{L}{A}$ for the collection of dual cells in $\Delaunay{A}$, and observe that $\cScape{L}{A}$ is the Voronoi scape of $L$.

  As proved in \cite{Sib80}, the weighted Voronoi tessellation is the intersection of $L$ with $\Voronoi{A}$ and, by duality, the weighted Delaunay mosaic is the orthogonal projection of $\cScape{L}{A}$ to $L$.
  If $L$ and $\Delaunay{A}$ are in general position, then all Delaunay cells in $\cScape{L}{A}$ project injectively to $L$, and the cells of dimension less than $p$ form a set of zero measure.
  If $\Delaunay{A}$ covers $\Rspace^d$, then the weighted Delaunay mosaic covers $\Rspace^p$.
  Hence, for almost all point-direction pairs, $(x, L)$, there is a unique Delaunay $p$-cell $\gamma$, such that $(x, L) \in \tile{\gamma}{\gamma^*}$, as claimed.
\end{proof}

The proof of the lemma gives some insight into the motivation for choosing this particular tiling of the space of point-direction pairs.
We now compute the measure of a tile.
\begin{lemma}[Volume of Tile]
  \label{lem:volume_of_tile}
  The measure of $J = \tile{\gamma}{\gamma^*}$ is
  $\Vol{}{J}{} = \Vol{p}{\gamma}{} \, \Vol{d-p}{\gamma^*}{} / \binom{d}{p}$.
\end{lemma}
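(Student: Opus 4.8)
\emph{Proof proposal.} The plan is to compute $\Vol{}{J}{}$ by integrating over the point-direction pairs with the natural product measure, taking the Lebesgue integral over $x \in \Rspace^d$ inside the uniform integral over $L \in \Grass{p}{d}$. Fix $L$ and split every point orthogonally as $x = \proj{x}{L} + \proj{x}{L^\perp}$ with $\proj{x}{L} \in L$ and $\proj{x}{L^\perp} \in L^\perp$. First I would show that the two defining conditions of $J$ decouple along this splitting. Since the nearest-point projection onto the affine plane $L+x$ sends a point $y$ to $x + \proj{(y-x)}{L}$, the condition $x \in \proj{\gamma}{L+x}$ is equivalent to $\proj{x}{L} \in \proj{\gamma}{L}$ and depends on $x$ only through $\proj{x}{L}$. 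Symmetrically, $L+x = \proj{x}{L^\perp} + L$, so $(L+x) \cap \gamma^* \neq \emptyset$ is equivalent to $\proj{x}{L^\perp} \in \proj{\gamma^*}{L^\perp}$ and depends on $x$ only through $\proj{x}{L^\perp}$.

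Consequently, for each fixed $L$ the admissible set of points is the product box $\proj{\gamma}{L} \times \proj{\gamma^*}{L^\perp}$ inside $L \oplus L^\perp = \Rspace^d$, and the inner integral equals $\Vol{p}{\proj{\gamma}{L}}{} \cdot \Vol{d-p}{\proj{\gamma^*}{L^\perp}}{}$. Next I would invoke the duality between $\Delaunay{A}$ and $\Voronoi{A}$: the affine hulls of $\gamma$ and $\gamma^*$ are orthogonal, so $\gamma$ is parallel to some $p$-plane $V \in \Grass{p}{d}$ while $\gamma^*$ is parallel to $V^\perp$. By the definition of the angle as a projection ratio (and the translation-invariance of projected volume), this gives $\Vol{p}{\proj{\gamma}{L}}{} = \Vol{p}{\gamma}{} \cos\Angle{V}{L}$ and $\Vol{d-p}{\proj{\gamma^*}{L^\perp}}{} = \Vol{d-p}{\gamma^*}{} \cos\Angle{V^\perp}{L^\perp}$.

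The crux is then the identity $\cos\Angle{V}{L} = \cos\Angle{V^\perp}{L^\perp}$, relating a $p$-plane/$p$-plane angle to the complementary $(d-p)$-plane/$(d-p)$-plane angle. I would establish it through the principal angles of $V$ and $L$: in the canonical (CS-type) decomposition of $\Rspace^d$, each two-dimensional block contributes the same principal angle $\theta_i$ to both pairs, while the one-dimensional pieces supported on $V \cap L$, $V \cap L^\perp$, $V^\perp \cap L$ and $V^\perp \cap L^\perp$ only insert factors $\cos 0 = 1$ or $\cos\tfrac{\pi}{2} = 0$ into the two products of cosines in a balanced way. Hence both sides equal $\prod_i \cos\theta_i$. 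I expect this to be the main obstacle, because it requires tracking the degenerate principal angles (the zeros and right angles) carefully so that the two products match even when $\cos\Angle{V}{L} = 0$.

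Granting the identity, the two projection factors multiply to $\Vol{p}{\gamma}{} \, \Vol{d-p}{\gamma^*}{} \cos^2\Angle{V}{L}$. Integrating over the Grassmannian and recognizing $\int_{L} \cos^2\Angle{V}{L}\,\diff L$ as the second projection moment $\moment{p}{d}{2}$ from \eqref{eqn:momentDef2} (using symmetry of the angle with $L_0 = V$), Lemma~\ref{lem:projection_moments} supplies the value $1/\binom{d}{p}$. This yields $\Vol{}{J}{} = \Vol{p}{\gamma}{}\,\Vol{d-p}{\gamma^*}{}/\binom{d}{p}$, as claimed.
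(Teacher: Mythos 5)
Your proposal is correct and follows essentially the same route as the paper's proof: the same splitting $x = y + z$ with $y \in L$ and $z \perp L$, the same decoupling of the two defining conditions into $y \in \proj{\gamma}{L}$ and $(L+z) \cap \gamma^* \neq \emptyset$, and the same reduction of the Grassmannian integral of $\cos^2 \Angle{L}{\gamma}$ to the second projection moment $\moment{p}{d}{2} = 1/\tbinom{d}{p}$ via Lemma~\ref{lem:projection_moments}. The only difference is that you spell out, via principal angles, the identity $\cos \Angle{V}{L} = \cos \Angle{V^\perp}{L^\perp}$, which the paper uses implicitly when it assigns the same factor $t = \cos \Angle{L}{\gamma}$ to both projected volumes; that identity is a correct and standard fact, so your added detail only strengthens the argument.
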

\begin{proof}
  The measure of the tile is the integral of $1$ over its pairs.
  Setting $x = y + z$, in which $y \in L$ and $z \in L^\bot$, the integral is
  \begin{align}
    \Vol{}{J}{}  &=  \int\nolimits_{L \in \Grass{p}{d}}
        \int\nolimits_{y \in L} \Indicator{y \in \proj{\gamma}{L}}
        {\int\nolimits_{z \in L^{\bot}}} \Indicator{(L+z) \cap \gamma^* \neq \emptyset}
             \diff z \diff y \diff L
    \label{eqn:tilemusic1} \\
                 &=  \Vol{p}{\gamma}{} \, \Vol{d-p}{\gamma^*}{} \,
        \int\nolimits_{L \in \Grass{p}{d}} \cos^2 \Angle{L}{\gamma} \diff L ,
    \label{eqn:tilemusic2}
  \end{align}
  where we get \eqref{eqn:tilemusic2} by noticing that the innermost integral in \eqref{eqn:tilemusic1} is the $(d-p)$-dimensional volume of the projection of $\gamma^*$ to $L^\bot$, which is $\Vol{d-p}{\gamma^*}{} \cos \Angle{L^\bot}{\gamma^*} = \Vol{d-p}{\gamma^*}{} \cos \Angle{L}{\gamma}$, and the middle integral is the $p$-dimensional volume of the projection of $\gamma$ to $L$, which is $\Vol{p}{\gamma}{} \cos \Angle{L}{\gamma}$.
  {Using \eqref{eqn:momentDef2}, we see that the integral in \eqref{eqn:tilemusic2} is $\moment{p}{d}{2}$, and using \eqref{eqn:moment2}, we get the claimed equation.}
\end{proof} 

We take a closer look at the projection of a tile to $\Rspace^d$.
Let $(x, L)$ be a point-direction pair in $J = \tile{\gamma}{\gamma^*}$ with $\dime{\gamma} = p$.
There are points $u \in \gamma$ and $v \in \gamma^*$ such that $x = \proj{u}{L+x}$ and $v = (L+x) \cap \gamma^*$.
Because of the right angle between the direction and the projection, we have $\Edist{x}{u}^2 + \Edist{x}{v}^2 = \Edist{u}{v}^2$, so $x$ lies on the smallest sphere that passes through $u$ and $v$.
Indeed, $u$ and $v$ define a $(d-1)$-dimensional set of point-direction pairs, and the points of these pairs all lie on the mentioned sphere.

\smallskip
Let $z_1 = \affine{\gamma} \cap \affine{\gamma^*}$ and observe that the sphere defined by $u$ and $v$ also passes through $z_1$.
{Let $R_0$ be the maximum distance between a point of $\gamma$ and a point of $\gamma^*$, and note that $R_0$ is the radius of every largest sphere that passes through the vertices of $\gamma$ and does not enclose any of the points in $A$; see Figure~\ref{fig:tile}.
A sphere with the latter property is commonly called an \emph{empty sphere} of $A$.}
{Since the diameter of the sphere spanned by $u$ and $v$ is $\Edist{u}{v} \leq R_0$, it follows that the ball with center $z_1$ and radius $R_0$ contains this sphere and thus the projection of $J = \tile{\gamma}{\gamma^*}$ to $\Rspace^d$; see again Figure~\ref{fig:tile}.
Hence, the volume of the projection of $J$ is at most $R_0^d$ times the volume of the unit ball in $\Rspace^d$.
Since we assume the uniform probability measure on $\Grass{p}{d}$, the same upper bound holds for the measure of $J$ itself.}
\begin{figure}[hbt]
  \centering \vspace{0.0in}
  \resizebox{!}{1.5in}{\input{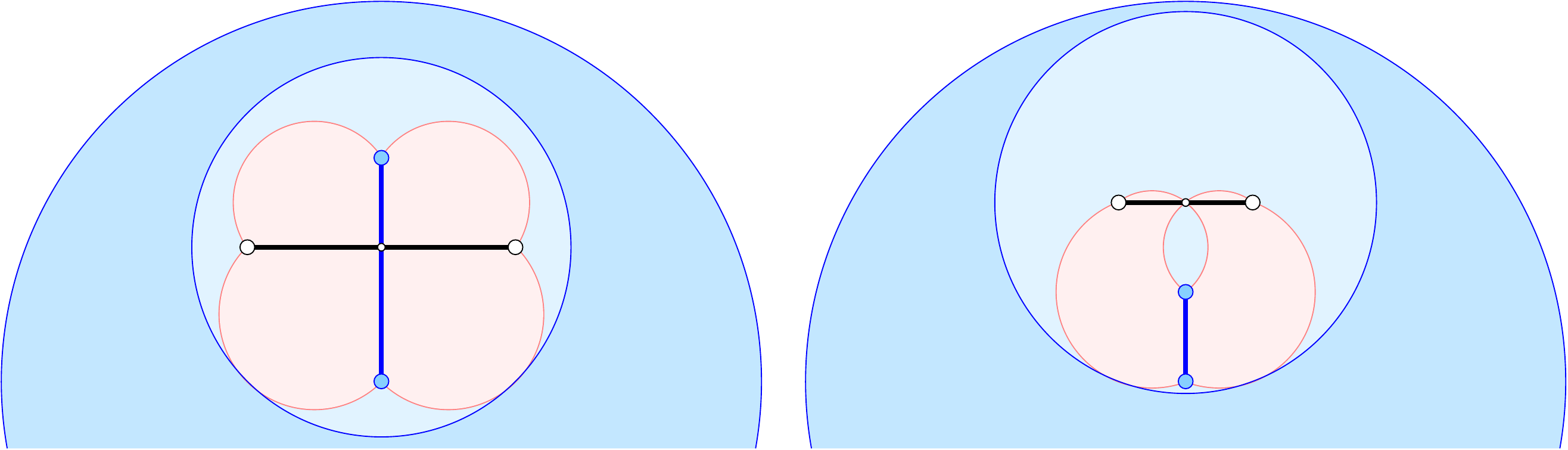_t}}
  \vspace{-0.0in}
  \caption{{\footnotesize On the \emph{left}, the (\emph{pink}) projection of the tile defined by a Delaunay edge, $\gamma$, and its dual Voronoi edge, $\gamma^*$, has the topology of a disk, while on the \emph{right}, its projection has the topology of a pinched annulus.
  In both cases, it is contained in the disk with radius $R_0$ centered at $z_1$, and this disk and therefore also the projection of the tile is contained in the disk with radius $2 R_0$ centered at $z_2$.}}
  \label{fig:tile}
\end{figure}

{A weaker bound on this measure implied by a different ball will be more convenient.
Consider therefore the largest empty sphere that passes through the vertices of $\gamma$.
Its radius is $R_0$ and its center, $z_2$, lies on $\gamma^*$.
Hence $\Edist{z_2}{z_1} \leq R_0$, which implies that the ball with center $z_2$ and radius $2R_0$ contains the ball with center $z_1$ and radius $R_0$ and therefore also the projection of $J$ to $\Rspace^d$; see again Figure~\ref{fig:tile}.
We state the result for later reference.}
\begin{lemma}[Projection of Tile]
  \label{lem:projection_of_tile}
  {Let $z_2$ and $R_0$ be the center and radius of the largest empty sphere that passes through the vertices of $\gamma \in \Delaunay{A}$.
  Then the ball with center $z_2$ and radius $2R_0$ contains the projection of $J = \tile{\gamma}{\gamma^*}$ to $\Rspace^d$.}
\end{lemma}

\section{Mixed Regularity} 
\label{sec:4}

Taking the union of progressively more tiles, we eventually cover all of $\Rspace^d \times \Grass{p}{d}$.
However, at each step {during this construction}, some of the points miss some of the directions, and which directions are covered depends on the mosaic.
In what follows, we require a mild regularity condition for this tiling.
For a set $\Omega \subseteq \Rspace^d$, we call a tile a \emph{boundary tile} of $\Omega$ if its projection to $\Rspace^d$ contains at least one point inside and at least one point outside $\Omega$.
\begin{definition}[Mixed Regularity]
  \label{def:mixed_regularity}
  Let $A \subseteq \Rspace^d$ be locally finite.
  We say that $A$ has the property of \emph{mixed regularity} if, for any $p$, the total measure of the boundary $p$-tiles of a $d$-ball of radius $R$ centered at the origin is $o(R^d)$.
\end{definition}

Note that $\conv{A} = \Rspace^d$ is necessary for $A$ to have the mixed regularity property.
Indeed, if $\conv{A}$ does not cover $\Rspace^d$, then there exists an unbounded Voronoi cell and thus a tile with infinite measure.
Motivated by the analysis in Section~\ref{sec:3}, we give some sufficient conditions for a set $A \subseteq \Rspace^d$ to have the mixed regularity property:
\begin{lemma}[Sufficient Conditions]
  \label{lem:sufficient_conditions}
  A locally finite set $A \subseteq \Rspace^d$ has the mixed regularity property if one of the following holds:
  \begin{enumerate}
    \item the radii of all circumspheres of top-dimensional Delaunay cells are bounded;
    \item each ball in $\Rspace^d$ of radius greater than {some finite} $R_0$ contains a point of $A$;
    \item there is a function $g(R) = o(R)$ such that every ball of radius $g(R)$ that intersects the $d$-ball of radius $R$ centered at the origin contains at least one point of $A$.
  \end{enumerate}
\end{lemma}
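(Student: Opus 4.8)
The plan is to translate the total measure of the boundary tiles into the $d$-volume of a thin spherical shell around the sphere of radius $R$, and then to show that in each of the three cases this shell is thin enough. The main tool is Lemma~\ref{lem:projection_of_tile}: the projection of $\tile{\gamma}{\gamma^*}$ to $\Rspace^d$ lies in the ball $B(z_0,R_0)$ with $z_0=z(\gamma,\gamma^*)$ and $R_0=R(\gamma,\gamma^*)$. First I would record two elementary consequences. Since $\affine{\gamma}\perp\affine{\gamma^*}$ and $z_0$ is their intersection, the Pythagorean identity $\Edist{z_0}{u}^2+\Edist{z_0}{v}^2=\Edist{u}{v}^2\le R_0^2$ for $u\in\gamma$, $v\in\gamma^*$ shows that both $\gamma$ and $\gamma^*$ lie in $B(z_0,R_0)$; in particular every vertex of $\gamma^*$, being a Voronoi vertex, i.e.\ the circumcenter of a top-dimensional Delaunay cell, lies in $B(z_0,R_0)$. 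Moreover $R_0$ equals the circumradius of the top-dimensional cell whose circumcenter realizes the maximal distance to $\gamma$, as noted before the lemma.

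For the shell reduction, a tile is a \emph{boundary tile} of the $d$-ball of radius $R$ only if $B(z_0,R_0)$ meets both the inside and the outside of that ball, which forces $R-R_0<\norm{z_0}<R+R_0$ and hence places the whole projection in the shell $S_R=\{x\mid R-2R_0<\norm{x}<R+2R_0\}$. If we have a uniform bound $R_0\le\rho(R)$ over all boundary tiles at scale $R$, then each boundary tile is contained in $S_R\times\Grass{p}{d}$ with $S_R$ of half-width $2\rho(R)$. Because the tile interiors are pairwise disjoint (Lemma~\ref{lem:uniqueness_of_tile}) and the Grassmannian carries a probability measure, the total boundary measure is at most the $d$-volume of this shell, which is a constant times $R^{d-1}\rho(R)$. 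Thus mixed regularity reduces to showing $\rho(R)=o(R)$.

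Under condition (1) the circumradii of top-dimensional cells are bounded by some $R_{\max}$, so $R_0\le R_{\max}$ for every tile and we take $\rho(R)\equiv R_{\max}$, giving shell volume $O(R^{d-1})=o(R^d)$. Condition (2) is a special case: if every ball of radius exceeding the global constant $R_0$ contains a point of $A$, then no empty ball, and in particular no circumball of a Delaunay cell, can have radius larger than $R_0$, so (1) holds.

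The genuinely new work, and the main obstacle, is condition (3), where circumradii need not be globally bounded. Here I would exploit the localization above: for a boundary tile the circumcenter $v$ realizing $R_0$ satisfies $\norm{v}<R+2R_0$, and the corresponding empty circumball $B(v,R_0)$ meets the ball of radius $\norm{v}$ about the origin. Applying condition (3) at scale $\norm{v}$ then forces $R_0\le g(\norm{v})$, since otherwise a concentric ball of radius $g(\norm{v})<R_0$ would have to contain a point of $A$ strictly inside the empty circumball. After replacing $g$ by its running maximum (nondecreasing and still $o(R)$), this yields the self-referential bound $R_0\le g(R+2R_0)$. Resolving it is the delicate step: for $R$ beyond a constant the case $R_0>R$ is excluded, because $\norm{v}<3R_0$ together with $g(s)/s\to 0$ would give $R_0\le g(3R_0)<R_0$; hence $R_0\le R$, so $\norm{v}<3R$ and $R_0\le g(3R)=:\rho(R)=o(R)$ uniformly over all boundary tiles. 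Feeding $\rho(R)=g(3R)$ into the shell estimate gives total boundary measure $O(R^{d-1}g(3R))=o(R^d)$, completing the proof.
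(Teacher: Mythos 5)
The paper never actually proves this lemma: it is stated bare, followed only by the remark that conditions 1 and 2 are equivalent and condition 3 is weaker, so there is no official argument to compare with and your proposal has to stand on its own. It does: the proof is correct, and it develops precisely the route the surrounding text gestures at. Lemma~\ref{lem:projection_of_tile} confines each tile over the ball with center $z_0$ and radius $R_0$; for a boundary tile this forces $R-R_0<\norm{z_0}<R+R_0$, hence confinement in a shell of half-width $2R_0$ around the sphere of radius $R$; and the a.e.\ uniqueness of tiles (Lemma~\ref{lem:uniqueness_of_tile}) converts any uniform bound $R_0\le\rho(R)=o(R)$ over boundary tiles into a total measure at most the shell volume, which is $O(R^{d-1}\rho(R))=o(R^d)$. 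This is also the skeleton of the paper's proof of Lemma~\ref{lem:mixed_regularity_in_expectation}, with your deterministic shell estimate playing the role of the stochastic count cited from \cite{ENR17}. Conditions 1 and 2 are then immediate, and the genuinely new content is your bootstrap for condition 3: the maximizing circumcenter $v$ lies in $\gamma^*$, hence in the ball around $z_0$, giving $\norm{v}<R+2R_0$; emptiness of the circumball gives $R_0\le g(\norm{v})$; and the $o(R)$ hypothesis excludes $R_0>R$ for all large $R$, yielding the uniform bound $\rho(R)=g(3R)$ with $g$ monotonized. Each step checks out, including the subtle point that the monotonized $g$ is used only as an upper bound in the chain $R_0\le g(\norm{v})\le g(R+2R_0)$, while condition 3 itself is invoked with the original $g$. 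The one caveat worth making explicit is inherited from the paper rather than introduced by you: under condition 1 the argument (like the lemma itself) implicitly assumes that every Delaunay cell is a face of a top-dimensional cell and that all Voronoi cells are bounded, so that $R_0$ is finite and equals a circumradius; otherwise condition 1 can hold vacuously (e.g., $A$ contained in a hyperplane) while mixed regularity fails, consistent with the paper's own remark that $\conv{A}=\Rspace^d$ is necessary. Under conditions 2 and 3 this boundedness is automatic, as your argument effectively shows.
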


Conditions~1 and 2 are equivalent, while Condition~3 is weaker.
We finish this section with an application to Poisson point processes:
\begin{lemma}[Mixed Regularity in Expectation]
  \label{lem:mixed_regularity_in_expectation}
  A stationary Poisson point process, $A \subseteq \Rspace^d$, has the mixed regularity property in expectation; that is: the total \emph{expected} measure of the boundary tiles of a $d$-ball
  with radius $R$ centered at the origin is $o(R^d)$.
\end{lemma}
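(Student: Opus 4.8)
The plan is to fix $p$—there are only finitely many admissible values $0\le p\le d$—and to estimate the total measure of the boundary $p$-tiles of the $d$-ball $B_R$ of radius $R$ centered at the origin directly from Lemma~\ref{lem:projection_of_tile} and the volume bound stated after it. That lemma places the projection of a tile $J=\tile{\gamma}{\gamma^*}$ to $\Rspace^d$ inside the closed ball $\overline{B}(z_0,R_0)$, where $z_0=z(\gamma,\gamma^*)$ and $R_0=R(\gamma,\gamma^*)$, and bounds its measure by $\kappa_d R_0^d$, with $\kappa_d$ the volume of the unit $d$-ball. If $J$ is a boundary tile of $B_R$, then $\overline{B}(z_0,R_0)$ meets both the interior and the exterior of $B_R$, which forces $\bigl|\norm{z_0}-R\bigr|<R_0$: the center $z_0$ must lie within distance $R_0$ of the sphere of radius $R$. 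Consequently the total measure of the boundary $p$-tiles is at most $\kappa_d\sum_\gamma R_0^d\,\Indicator{|\norm{z_0}-R|<R_0}$, the sum running over the $p$-cells $\gamma$ of $\Delaunay{A}$.

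Next I would pass to expectations using stationarity. The marks $(z_0(\gamma),R_0(\gamma))$ are translation covariant: shifting $A$ by $v$ shifts $z_0$ by $v$ and leaves $R_0$ fixed, so the centers $z_0(\gamma)$ carrying the circumradius $R_0(\gamma)$ as a mark form a stationary marked point process. By the refined Campbell theorem there is a measure $\Lambda$ on $[0,\infty)$, the intensity measure of circumradii per unit volume, with
\[
  \Exp{\textstyle\sum_\gamma R_0^d\,\Indicator{|\norm{z_0}-R|<R_0}}
    = \int_{\Rspace^d}\!\int_0^\infty r^d\,\Indicator{|\norm{z}-R|<r}\;\Lambda(\diff r)\,\diff z .
\]
For fixed $r$ the inner integral over $z$ is the volume of the shell $\{\,R-r<\norm{z}<R+r\,\}$, namely $\kappa_d\bigl[(R+r)^d-\max(0,R-r)^d\bigr]$.

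Multiplying by the tile-measure constant $\kappa_d$ and splitting the range of $r$ then finishes the elementary part. For $r\le R$ the mean value theorem gives $(R+r)^d-(R-r)^d\le d\,2^d R^{d-1} r$, so the expected boundary measure from this range is at most $\kappa_d^2\, d\,2^d R^{d-1}\int_0^\infty r^{d+1}\,\Lambda(\diff r)=O(R^{d-1})$. For $r>R$ we have $(R+r)^d\le 2^d r^d$ and $\max(0,R-r)=0$, so this range contributes at most $\kappa_d^2\,2^d\int_R^\infty r^{2d}\,\Lambda(\diff r)$, which tends to $0$ as soon as $\int_0^\infty r^{2d}\,\Lambda(\diff r)<\infty$. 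Adding the two bounds yields $o(R^d)$, as required.

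The crux—and the only place the Poisson hypothesis enters—is the finiteness of the moments of $\Lambda$, in particular $\int_0^\infty r^{2d}\,\Lambda(\diff r)<\infty$ (the intensity $\Lambda([0,\infty))$ of $p$-cells, and the lower moments, are likewise finite for a Poisson process). This is governed by the tail of the circumradius: since $R_0(\gamma)$ is the radius of a largest empty sphere through the vertices of $\gamma$, the event $R_0(\gamma)>r$ forces an empty ball of radius about $r$, and for a stationary Poisson process of intensity $\intensity$ such a ball is present with probability at most $\exp(-\intensity\kappa_d r^d)$. I would make this rigorous with the Slivnyak--Mecke formula, writing the expected number of $p$-cells of circumradius exceeding $r$ per unit volume as an integral over $(p+1)$-tuples of candidate vertices weighted by the emptiness probability of the associated sphere; the factor $\exp(-\intensity\kappa_d r^d)$ then dominates the polynomial growth from integrating over the tuples and renders every moment of $\Lambda$ finite. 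This tail estimate is the main obstacle, the shell-volume bookkeeping above being routine.
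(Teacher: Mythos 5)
Your argument is correct in outline but takes a genuinely different route from the paper. The paper's own proof is a reduction plus citation: it observes that the Delaunay cell of every boundary tile is a face of a top-dimensional Delaunay simplex whose circumsphere crosses the sphere of radius $R$, invokes \cite[Appendix A]{ENR17} for the fact that the expected number of such circumspheres is $o(R^d)$ (remarking that a minor modification of that proof bounds the total \emph{volume} of the corresponding circumballs), and then combines this with Lemma~\ref{lem:projection_of_tile}. You instead make the argument self-contained: the stationary marked point process $(z_0(\gamma), R_0(\gamma))$, the refined Campbell theorem, and the shell-volume estimate replace the citation, and the entire burden is shifted onto moment bounds for the mark measure $\Lambda$. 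Your shell bookkeeping is correct --- the constraint $\bigl|\norm{z_0}-R\bigr|<R_0$ for boundary tiles, the shell volume $\kappa_d\bigl[(R+r)^d-\max(0,R-r)^d\bigr]$, and the split at $r=R$ all check out, and in fact you obtain $O(R^{d-1})+o(1)$, stronger than the required $o(R^d)$. Both proofs ultimately rest on the same probabilistic phenomenon (exponential tails of Poisson--Delaunay circumradii); yours buys self-containedness at the cost of proving the tail estimate, the paper's buys brevity at the cost of an external dependency.

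One imprecision in your final step deserves flagging. For $p<d$, a $(p+1)$-tuple of points has no single ``associated sphere'': it has a $(d-p)$-parameter family of circumscribed spheres, and the event $R_0(\gamma)>r$ is that \emph{some} member of this family with radius exceeding $r$ bounds an empty ball. Its probability is therefore not literally the emptiness probability $\exp(-\intensity\kappa_d r^d)$ of one fixed ball, so the Slivnyak--Mecke integrand needs an extra covering argument --- for instance, an empty ball of radius greater than $r$ with a vertex $x_0$ on its boundary contains an empty ball of radius $r/4$ centered at $x_0+\tfrac{r}{2}u$ for some $u$ in a fixed finite net of directions, giving a tail of the form $C_d\exp(-\intensity\kappa_d (r/4)^d)$ --- or, alternatively, the paper's observation that $R_0(\gamma)$ is realized as the circumradius of a top-dimensional Delaunay simplex containing $\gamma$, for which the circumsphere \emph{is} unique and Mecke's formula applies verbatim to $(d+1)$-tuples. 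Either repair yields finiteness of all moments of $\Lambda$ and closes your argument; as written, though, that step is a sketch with a genuine (if standard and fixable) hole, at roughly the same level of rigor as the paper's own ``a minor modification of that proof suffices.''
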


\begin{proof}
  {Let $B(R)$ be the ball with radius $R$ centered at the origin, and let $J = \tile{\gamma}{\gamma^*}$ be a boundary tile.
  Its Delaunay cell, $\gamma$, is almost surely a simplex.
  Consider the top-dimensional cell that contains $\gamma$ as a face and whose circumsphere is the largest empty sphere that passes through the vertices of $\gamma$.
  Letting $z_2$ and $R_0$ be the center and radius of this sphere, Lemma~\ref{lem:projection_of_tile} implies that the concentric ball with twice the radius, $2R_0$, contains the projection of $J$ to $\Rspace^d$ and thus intersects the boundary of $B(R)$.
  \cite[Appendix A]{ENR17} studies the total number of such balls (albeit without doubling the radius), and it is straightforward to modify the proof to take the volume and doubling of the radius into account.
  With that, we get that the expected total volume of such balls containing the boundary tiles is $o(R^d)$.
  This implies the same upper bound for the expected total measure of the boundary tiles.}
\end{proof}

\section{Mixed Cells} 
\label{sec:5}

Call $\Vol{p}{\gamma} \, \Vol{d-p}{\gamma^*}$ the \emph{mixed cell volume} of a $p$-cell $\gamma \in \Delaunay{A}$ and its dual $(d-p)$-cell $\gamma^* \in \Voronoi{A}$.
This concept relates to a particular decomposition of $\Rspace^d$, as we now explain.
Given $A \subseteq \Rspace^d$, the $d$-dimensional cells of the \emph{mixed complex} defined in \cite{Ede99} are translates of the products $\frac{1}{2} \gamma \times \frac{1}{2} \gamma^*$.
{We refer to $\frac{1}{2} \gamma \times \frac{1}{2} \gamma^*$ as a \emph{mixed cell} and note that its volume is $\Vol{d}{\frac{1}{2} \gamma \times \frac{1}{2} \gamma^*}{} = \Vol{p}{\gamma}{} \, \Vol{d-p}{\gamma^*}{} / 2^d$.
As proved in \cite{Ede99}, the mixed cells have pairwise disjoint interiors and they cover $\Rspace^d$.}
Assuming the mixed regularity property, this implies that, up to a lower order term,
the cells for $p = 0$ cover a fraction of $1 / 2^d$ of $B(R)$.
By symmetry, this is also true for $p = d$.
We continue with a generalization of these bounds to dimension $p$ between $0$ and $d$.
\begin{corollary}[Mixed Cell Volumes]
  \label{cor:mixed_volumes}
  Let $A \subseteq \Rspace^d$ have the mixed regularity property.
  For any $0 \leq p \leq d$, the sum of the mixed cell volumes, over all $p$-cells of $\Delaunay{A}$ contained in a ball of radius $R$, is
  $\tbinom{d}{p} R^d \nu_d + o(R^d)$,
  in which $\nu_d$ is the volume of the unit ball in $\Rspace^d$.
\end{corollary}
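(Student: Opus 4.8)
The plan is to recognize the sum of mixed volumes as a fixed multiple of a sum of tile measures, and then to evaluate that sum by exploiting the fact that the tiles partition $\Rspace^d \times \Grass{p}{d}$. Write $B_R$ for the closed $d$-ball of radius $R$ centered at the origin. By Lemma~\ref{lem:volume_of_tile} each mixed volume satisfies $\Vol{p}{\gamma} \Vol{d-p}{\gamma^*} = \binom{d}{p}\,\Vol{}{\tile{\gamma}{\gamma^*}}$, so it suffices to show that the tile measures summed over all $p$-cells $\gamma \subseteq B_R$ equal $\nu_d R^d + o(R^d)$; multiplying by $\binom{d}{p}$ then yields the statement. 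The key structural input is that $\Rspace^d \times \Grass{p}{d}$ carries the product of Lebesgue measure with the uniform \emph{probability} measure on the Grassmannian, so the slab $B_R \times \Grass{p}{d}$ has total measure exactly $\nu_d R^d$. By Lemma~\ref{lem:uniqueness_of_tile} the tiles cover this slab with pairwise disjoint interiors, whence $\nu_d R^d = \sum_\gamma \Vol{}{\tile{\gamma}{\gamma^*} \cap (B_R \times \Grass{p}{d})}$.

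Next I would classify the tiles according to how their projections to $\Rspace^d$ meet $B_R$: a tile is \emph{interior} if its projection lies inside $B_R$, \emph{exterior} if the projection misses $B_R$, and otherwise a \emph{boundary} tile in the sense of Definition~\ref{def:mixed_regularity}. Interior tiles contribute their full measure to the identity above, exterior tiles contribute nothing, and boundary tiles contribute something between $0$ and their full measure. Mixed regularity bounds the total measure of the boundary tiles by $o(R^d)$, so the boundary contribution to the identity is also $o(R^d)$; consequently $\sum_{\text{interior}} \Vol{}{\tile{\gamma}{\gamma^*}} = \nu_d R^d + o(R^d)$.

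It then remains to pass from the interior tiles to the geometric condition $\gamma \subseteq B_R$ appearing in the statement, and this is the step requiring the most care. The bridge is the observation that a cell is always contained in the projection of its own tile: for any $u \in \gamma$ and $v \in \gamma^*$, choosing a $p$-plane $L$ through the origin with $v - u \in L$ gives $(u, L) \in \tile{\gamma}{\gamma^*}$, since $u$ projects to itself on $L + u$ and $v \in (L+u) \cap \gamma^*$; hence $\gamma$ lies in the projection of $\tile{\gamma}{\gamma^*}$ to $\Rspace^d$. (The degenerate cases $p = 0$ and $p = d$ reduce directly to the fact that Voronoi cells, respectively Delaunay cells, tile $\Rspace^d$.) Together with the upper bound from Lemma~\ref{lem:projection_of_tile}, placing this projection inside $B(z_0, R_0)$, the containment shows that every interior tile has $\gamma \subseteq B_R$, while every $\gamma \subseteq B_R$ whose tile is not interior must be a boundary tile. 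Thus the interior sum and the full sum over $\{\gamma \subseteq B_R\}$ differ by at most the total boundary measure, that is by $o(R^d)$, and I can sandwich $\sum_{\gamma \subseteq B_R} \Vol{}{\tile{\gamma}{\gamma^*}} = \nu_d R^d + o(R^d)$. Multiplying by $\binom{d}{p}$ completes the argument, with the interplay between the two notions of containment---geometric containment of $\gamma$ versus containment of the tile's projection---being the only genuinely delicate point, and one that is absorbed entirely by the mixed regularity hypothesis.
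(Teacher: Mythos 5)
Your proof is correct and follows essentially the same route as the paper's: both reduce mixed volumes to tile measures via Lemma~\ref{lem:volume_of_tile}, identify the measure of $B_R \times \Grass{p}{d}$ as $\nu_d R^d$ using the tiling property, control the boundary tiles by mixed regularity, and bridge geometric containment of $\gamma$ with containment of the tile's projection through the same key observation---that every point $u \in \gamma$ lies in the projection of $\tile{\gamma}{\gamma^*}$ because one can choose a direction $L$ containing $v - u$ for some $v \in \gamma^*$. The only differences are presentational: your write-up spells out the sandwich argument that the paper states tersely, and your appeal to Lemma~\ref{lem:projection_of_tile} is not actually needed for either implication.
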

\begin{proof}
  {Recall that $B(R)$ is the ball with radius $R$ centered at the origin.
  Set $\Bcal{p}{R} = B(R) \times \Grass{p}{d}$, let $M_p (R)$ be the smallest union of $p$-tiles that
  contains $\Bcal{p}{R}$, and let $\partial M_p(R)$ be the union of boundary tiles of $B(R)$.}
  Clearly,
  \begin{align}
    M_p(R) \setminus \partial M_p(R)  &\subseteq  \Bcal{p}{R}  \subseteq  M_p (R) .
  \end{align}
  If a tile, $J = J(\gamma, \gamma^*)$, contains a point inside the ball, then either $\gamma$ is inside the ball, or $J$ is a boundary tile.
  Indeed, for every point $x \in \gamma \setminus B(R)$, there is a direction $L$, such that $L+x$ intersects $\gamma^*$, hence $(x,L) \in J(\gamma, \gamma^*)$.
  {In other words, if $\gamma$ is not contained in $B(R)$, then neither is the projection of $J$ to $\Rspace^d$.}
  By Lemma~\ref{lem:volume_of_tile}, the measure of this tile
  is $\Vol{p}{\gamma} \Vol{d-p}{\gamma^*} / \tbinom{d}{p}$ and,
  by the mixed regularity property,
  the measures of the tiles corresponding to Delaunay cells inside the ball
  sum up to $\Vol{d}{B(R)}(1 + o(1))$.
  Multiplying by $\tbinom{d}{p}$ completes the proof.
\end{proof}

\section{Average and Expected Distortion}
\label{sec:6}

For a locally finite $A \subseteq \Rspace$, a generically placed $p$-dimensional set $\Omega \subseteq \Rspace^d$ intersects only $(d-p)$-dimensional Voronoi cells of $A$, and any such intersection has a finite \emph{multiplicity}.
In this case, the Voronoi scape of $\Omega$ and $A$, denoted $\cScape{\Omega}{A}$, is the multiset of Delaunay $p$-cells,
in which every $\gamma \in \Delaunay{A}$ appears as many times, as
$\Omega$ intersects its dual $\gamma^*$.
For completeness we mention that for a non-generically placed $A$, the multiplicity can be defined as the (potentially infinite) Euler characteristic of the intersection, and the Voronoi scape can contain Delaunay cells of dimensions different from $p$.
For our analysis these zero-measure set of placements are however irrelevant.
We are ready to prove the main result of the paper.
\begin{theorem}[Average Volume]
  \label{thm:average_volume}
  Let $A \subseteq \Rspace^d$ have the mixed regularity property,
  and let $\Omega$ be a $p$-dimensional rectifiable set in $\Rspace^d$.
  The average $p$-dimensional volume of $\cScape{\Omega}{A}$, averaged over all congruent copies of $\Omega$ inside the $d$-ball with radius $R$ centered at the origin, is $\Vol{p}{\Omega} (\DConstant{p}{d} + o(1))$ as $R$ goes to infinity.
\end{theorem}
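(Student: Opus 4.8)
The plan is to reduce the claim to a single flat $p$-dimensional patch and then evaluate the averaged scape volume cell by cell, matching it to the mixed-volume sum of Corollary~\ref{cor:mixed_volumes}. Since $\Omega$ is rectifiable, I would first cover it, up to a set of $p$-volume zero, by finitely many small almost-flat patches; because both the multiplicity $\#(\gamma^* \cap \Omega)$ and the volume $\Vol{p}{\Omega}$ are additive over such a partition, and because the averaged contribution of a patch will turn out to depend only on its $p$-volume (not on its shape or placement within the rigid copy), it suffices to treat the case where $\Omega \subseteq L_0$ is a flat $p$-cube. For a congruent copy $\Omega' = \rho\Omega + x$, with $\rho$ a rotation, $x$ a translation, and $L = \rho L_0 \in \Grass{p}{d}$ the direction spanned by the copy, the scape volume is
\begin{align*}
  \Vol{p}{\cScape{\Omega'}{A}}  &=  \sum\nolimits_{\gamma} \Vol{p}{\gamma} \, \#(\gamma^* \cap \Omega') ,
\end{align*}
the sum running over $p$-cells $\gamma \in \Delaunay{A}$. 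Averaging over all copies inside $B(R)$ and exchanging sum and integral, I reduce the problem to computing, for each $\gamma$, the integral $I_\gamma$ of $\#(\gamma^* \cap \Omega')$ over the rigid motions keeping $\Omega'$ inside $B(R)$.

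The core computation evaluates $I_\gamma$ by the same splitting used in Lemma~\ref{lem:volume_of_tile}, writing the translation as $x = y + z$ with $y \in L$ and $z \in L^\bot$, so that $\Omega'$ lies in the affine plane $L + z$. Generically $\gamma^* \cap (L+z)$ is a single point $v$, whence $\#(\gamma^* \cap \Omega') = \Indicator{v \in \Omega'}$, and the integral over the in-plane translation $y$ equals $\Vol{p}{\Omega}$ whenever that plane meets $\gamma^*$, by translation invariance of Lebesgue measure on $L$. This is the decisive difference from Lemma~\ref{lem:volume_of_tile}: translating $\Omega$ to cover $v$ yields the full $\Vol{p}{\Omega}$ with \emph{no} cosine factor, whereas projecting $\gamma$ onto $L$ there yields $\cos \Angle{L}{\gamma} \, \Vol{p}{\gamma}$. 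The remaining integral over the offset $z$ is the $(d-p)$-volume of the projection of $\gamma^*$ onto $L^\bot$, namely $\cos \Angle{L}{\gamma} \, \Vol{d-p}{\gamma^*}$, and integrating over $L \in \Grass{p}{d}$ turns the single cosine into $\moment{p}{d}{1}$ through \eqref{eqn:momentDef2} and rotation invariance. Hence $I_\gamma$ equals a fixed constant (the total mass of the rotation group) times $\Vol{p}{\Omega} \, \Vol{d-p}{\gamma^*} \, \moment{p}{d}{1}$.

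Summing $\Vol{p}{\gamma} I_\gamma$ over the cells reachable by copies in $B(R)$ produces the mixed-volume sum $\sum_\gamma \Vol{p}{\gamma} \, \Vol{d-p}{\gamma^*}$, which by Corollary~\ref{cor:mixed_volumes} equals $\nu_d \binom{d}{p} R^d + o(R^d)$; the same rotation-mass constant reappears in the normalizer, whose translation part is the eroded ball of volume $\nu_d R^d (1 + o(1))$. Dividing, the rotation mass and $\nu_d R^d$ cancel, leaving $\Vol{p}{\Omega} \, \binom{d}{p} \, \moment{p}{d}{1} (1 + o(1))$; since $\binom{d}{p} = 1/\moment{p}{d}{2}$ by \eqref{eqn:moment2}, this is $\Vol{p}{\Omega} \, \moment{p}{d}{1} / \moment{p}{d}{2} = \Vol{p}{\Omega} \, \DConstant{p}{d}$, as claimed.

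The main obstacle is the control of lower-order terms, which I expect to be purely a matter of careful bookkeeping. Three boundary effects must each be shown to be $o(R^d)$: the discrepancy between the cells actually reachable by copies in $B(R)$ (those whose $\gamma^*$ meets $B(R)$, since the intersection point $v \in \Omega' \subseteq B(R)$ lies on $\gamma^*$) and the cells contained in $B(R)$ counted by Corollary~\ref{cor:mixed_volumes}; the truncation of the in-plane integral $\int \Indicator{v \in \Omega'} \diff y$ for points $v$ near $\partial B(R)$; and, in the rectifiability reduction, the replacement of the admissible-motion domain and normalizer for $\Omega$ by those for each patch. All three are governed by boundary tiles, whose total measure is $o(R^d)$ by the mixed regularity property together with Lemma~\ref{lem:projection_of_tile}, so each perturbs only the $o(R^d)$ remainder.
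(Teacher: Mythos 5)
Your proposal is correct, and it reaches the theorem by the same overall skeleton as the paper: compute, for each Delaunay $p$-cell $\gamma$, the total measure over rigid motions of the multiplicity $\chi(\Omega' \cap \gamma^*)$, obtain $\Vol{p}{\Omega} \, \Vol{d-p}{\gamma^*} \, \moment{p}{d}{1}$ per cell, then sum against $\Vol{p}{\gamma}$ via Corollary~\ref{cor:mixed_volumes}, divide by the normalizer $\nu_d R^d (1+o(1))$, and absorb all boundary effects into $o(R^d)$ using mixed regularity and Lemma~\ref{lem:projection_of_tile}. Where you genuinely differ is in how the per-cell identity is obtained. The paper cites the Crofton formula of \cite{ScWe08} for the rectifiable set $\Omega$ --- integrating $\chi((Q+y)\cap\Omega)$ over affine $(d-p)$-planes --- and then thickens the plane into the polyhedron $\gamma^*$, each intersection point contributing $\Vol{d-p}{\gamma^*}$ worth of in-plane motions. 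You run the dual decomposition: you move $\Omega$, split its motion into a direction $L$, an offset $z \bot L$, and an in-plane translation $y$, and exploit that $\gamma^*$ is flat and convex, so the Crofton-type integral you need, $\int_z \#\bigl(\gamma^* \cap (L+z)\bigr) \diff z = \cos \Angle{L}{\gamma} \, \Vol{d-p}{\gamma^*}$, is just a projection volume, after which \eqref{eqn:momentDef2} with $j=1$ yields $\moment{p}{d}{1}$. This buys a self-contained argument with no citation of Crofton, but it only works when $\Omega$ lies in a $p$-plane, so you pay with the reduction to flat patches; that reduction is the one place where your sketch is thinner than the paper's proof. A rectifiable set is covered, up to $p$-measure zero, by countably many $C^1$ --- hence only approximately flat --- patches, and passing from almost flat to exactly flat requires a genuine approximation estimate: this is precisely the content of the Crofton formula for rectifiable sets that the paper outsources to \cite{ScWe08}. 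Your boundary bookkeeping (reachable cells versus cells inside $B(R)$, truncation near the boundary sphere, per-patch motion domains) is sound and uses the same mechanism as the proof of Corollary~\ref{cor:mixed_volumes}.
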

\begin{proof}
  We start with the Crofton formula \cite[Formula (5.7)]{ScWe08},
  {which states that the $p$-dimensional volume of $\Omega$ is a constant times the integral of crossings between $\Omega$ and a $(d-p)$-plane, and this constant is the $1$-st projection moment:}
  \begin{align}
    \int_{Q \in \Grass{d-p}{d}} \int_{y \in Q^\bot}
      \chi((Q + y) \cap \Omega) \diff y \diff Q
    &= \moment{p}{d}{1} \Vol{p}{\Omega}.
  \end{align}
  Here $\chi((Q+y) \cap \Omega)$ is the multiplicity of the intersection between $Q+y$ and $\Omega$, which is almost always finite; see \cite[3.16]{Mor16} for the general statement that applies to rectifiable sets.
  Next consider a bounded convex polyhedron, $P$, whose dimension is $d-p$.
  Applying a rigid motion (a rotation composed with a translation), we get a \emph{congruent} copy, $P' \cong P$.
  We represent $P'$ as a polyhedron $P''$ in $Q \in \Grass{d-p}{d}$ and a shift $y \in  Q^\bot$. 
  For any fixed $p$-plane $Q+y$ and any fixed point inside it, the total measure of the congruent copies of $P$ inside $Q+y$ that contain this fixed point is $\Vol{d-p}{P}$. 
  We can thus compute the total measure of intersection points over all congruent copies of $P$ as
  \begin{align}
    \int_{P' \cong P} \chi(P' \cap \Omega) \diff P'
      &= \int_{Q \in \Grass{d-p}{d}}
         \int_{y \in Q^\bot}
         \int_{P \cong P'' \subseteq Q}
          \chi((P''+y) \cap \Omega)
        \diff P'' \diff y \diff Q \\
      &= \int_{Q \in \Grass{d-p}{d}}
         \int_{y \in Q^\bot}
          \Vol{d-p}{P} \chi((Q+y) \cap \Omega) \diff y \diff Q \\
      &= \Vol{d-p}{P} \Vol{p}{\Omega} \moment{p}{d}{1}.
  \end{align}
  Taking $P = \gamma^*$ and moving $\Omega$ instead of the polyhedron, we see that the total measure of intersection points of congruent copies of $\Omega$ with $\gamma^*$ is $\Vol{d-p}{\gamma^*} \Vol{p}{\Omega} \moment{p}{d}{1}$.

  \smallskip
  A $p$-cell $\gamma \in \Delaunay{A}$ belongs to the Voronoi scape of a congruent copy $\Omega'$ of $\Omega$ precisely $\chi(\Omega' \cap \gamma^*)$ times, and we just computed this quantity.
  The total contribution of $\gamma$ to the $p$-dimensional volume of the Voronoi scapes of the congruent copies of $\Omega$ is therefore $\Vol{p}{\gamma} \Vol{d-p}{\gamma^*} \Vol{p}{\Omega} \moment{p}{d}{1}$.
  We get the final result by dividing the total contribution of the $p$-cells in $\Delaunay{A}$ inside $B(R)$ by the total measure
  of the congruent copies inside the ball:
  \begin{align}
    \frac{ \sum_\gamma \Vol{p}{\gamma} \Vol{d-p}{\gamma^*} \Vol{p}{\Omega} \moment{p}{d}{1}}{\Vol{d}{B(R)} (1 + o(1))}
      &= \frac{\binom{d}{p} \Vol{d}{B(R)} (1+o(1)) \Vol{p}{\Omega} \moment{p}{d}{1}}{\Vol{d}{B(R)} (1+o(1))}
        \label{eqn:result1} \\
      &= \Vol{p}{\Omega} (\DConstant{p}{d} + o(1)) ,
        \label{eqn:result2}
  \end{align}
  {in which we use Corollary~\ref{cor:mixed_volumes} to get the right-hand side of \eqref{eqn:result1}, and \eqref{eqn:moment2} to get \eqref{eqn:result2}.}
\end{proof}

We finish by stating the answer to the original question that motivated the work reported in this paper.
We showed in Section~\ref{sec:4} that the stationary Poisson point process has the mixed regularity property in expectation, which allows us to repeat all results while adding the expectation to all quantities.
By the isometry invariance of the process, for any set $\Omega$, the expected volume of $\cScape{\Omega}{A}$ does not depend on the position of $\Omega$.
Exchanging the expectation and the average inside the ball of radius $R$ centered at the origin and letting $R$ go to infinity, we arrive at probabilistic versions of Theorem~\ref{thm:average_volume}:
\begin{theorem}[Expected Volume]
  \label{thm:expected_volume}
  Let $A \subseteq \Rspace^d$ be a stationary Poisson point process with intensity $\rho > 0$, and let $\Omega$ be a compact rectifyable $p$-manifold in $\Rspace^d$.
  Then the expected $p$-dimensional volume of the Voronoi scape of $\Omega$ and $A$ is $\DConstant{p}{d} \Vol{p}{\Omega}$.
\end{theorem}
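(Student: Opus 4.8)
The plan is to lift Theorem~\ref{thm:average_volume} from the deterministic, spatially-averaged setting to the probabilistic one by exploiting the isometry invariance of the process together with the expectation version of mixed regularity established in Lemma~\ref{lem:mixed_regularity_in_expectation}. The guiding principle is that, because a stationary and isotropic Poisson point process satisfies mixed regularity \emph{in expectation}, every deterministic estimate used in the proof of Theorem~\ref{thm:average_volume} survives after taking expectations and invoking linearity.

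First I would take expectations throughout the argument behind Corollary~\ref{cor:mixed_volumes}. Since, by Lemma~\ref{lem:mixed_regularity_in_expectation}, the total \emph{expected} measure of the boundary $p$-tiles of $B(R)$ is $o(R^d)$, the expected sum of the mixed volumes $\Vol{p}{\gamma}\,\Vol{d-p}{\gamma^*}$ over all $p$-cells $\gamma \in \Delaunay{A}$ inside $B(R)$ equals $\nu_d \binom{d}{p} R^d + o(R^d)$. Feeding this into the Crofton-based counting of Theorem~\ref{thm:average_volume}, the expected total $p$-volume of $\cScape{\Omega'}{A}$, averaged over all congruent copies $\Omega'$ of $\Omega$ inside $B(R)$, is $\Vol{p}{\Omega}(\DConstant{p}{d} + o(1))$.

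The second and decisive step is isometry invariance. Because $A$ is invariant under both translations and rotations, the distribution of $\cScape{\Omega'}{A}$, and hence the scalar $\Exp{\Vol{p}{\cScape{\Omega'}{A}}}$, is the same for every congruent copy $\Omega'$ of $\Omega$. Consequently the spatial average of this expected volume over the copies inside $B(R)$ is simply equal to that single, position-independent number. Equating the two evaluations of the average and letting $R \to \infty$ annihilates the $o(1)$ term; since the left-hand side does not depend on $R$ at all, the common expected volume must be exactly $\DConstant{p}{d} \Vol{p}{\Omega}$.

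The hard part will be justifying the interchange of expectation with the spatial-averaging integral and the subsequent limit $R \to \infty$, that is, running the deterministic argument "under the expectation sign." This is precisely where Lemma~\ref{lem:mixed_regularity_in_expectation} carries the load: the $o(R^d)$ control on the expected measure of boundary tiles supplies the integrability needed for a Fubini interchange and ensures that the accumulated error terms, now in expectation, stay negligible in the limit. It is this vanishing of the expected boundary contribution that upgrades the asymptotic equality of the average into the \emph{exact} identity claimed for the expectation.
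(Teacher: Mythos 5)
Your proposal is correct and follows essentially the same route as the paper: invoke Lemma~\ref{lem:mixed_regularity_in_expectation} to repeat the deterministic results of Corollary~\ref{cor:mixed_volumes} and Theorem~\ref{thm:average_volume} with expectations added, use the isometry invariance of the stationary Poisson point process to conclude that $\Exp{\Vol{p}{\cScape{\Omega'}{A}}}$ is the same for all congruent copies $\Omega'$, and then exchange expectation with the spatial average and let $R \to \infty$ so that the position-independent expected value must equal the exact constant $\DConstant{p}{d} \Vol{p}{\Omega}$. Your write-up is in fact more explicit than the paper's own (very terse) argument about where the interchange of expectation and averaging needs justification, but the underlying mechanism is identical.
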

{Note that the expected volume of the Voronoi scape does not depend on the intensity of the Poisson point process.
On the other hand, the variance does, but this is beyond the scope of this paper.}

\section{Discussion}
\label{sec:8}

The main contribution of this paper is a complete analysis of the average and expected distortion of $p$-dimensional Voronoi scapes in $\Rspace^d$, for $0 \leq p \leq d$.
For $p = 1$, these scapes are known as Voronoi paths, for which the expected distortion has been studied but was known only in $\Rspace^2$; see \cite{BTZ00}.
A useful insight from our analysis is that the expected distortion for a stationary Poisson point process is the average distortion for a general locally finite point set.
We make crucial use of this insight in the proof of our results.
\emph{Can these results be extended to other measures, such as notions of curvature, for example?}
The proof of Theorem~\ref{thm:average_volume} suggests that this extension would require a detailed analysis of the Crofton formula.
Insights in this direction could be helpful in using the Voronoi scape to measure otherwise difficult to measure shapes.

\smallskip
In our analysis, the properties that make a mosaic a Delaunay mosaic
are not used other than in the quantification of the mixed regularity
property for locally finite sets.
Indeed, we only need a pair of dual complexes in which dual cells
are orthogonal to each other,
a property that holds also for the generalizations of Voronoi
tessellations and Delaunay mosaics to points with real weights;
see e.g.\ \cite{Aur87b}.

\subsection*{Acknowledgements}

{\footnotesize
  {The authors thank Ranita Biswas and Tatiana Ezubova for the collaboration on computational experiments that motivated the work reported in this paper.}
  The authors also thank Daniel Bonnema for proofreading and noticing an issue with the original proof of Lemma~\ref{lem:mixed_regularity_in_expectation}.
}



\end{document}